\newtheorem{thm}{Theorem}[subsection]
\newtheorem{lem}[thm]{Lemma}
\newtheorem{prop}[thm]{Proposition}
\theoremstyle{definition}
\newtheorem{defn}[thm]{Definition}
\newtheorem{example}[thm]{Example}
\newtheorem{problem}[thm]{Problem}
\theoremstyle{remark}
\numberwithin{equation}{subsection}
\begin{document}

\title[satellite constructions and Geometric Classification of Brunnian links]
{satellite constructions and Geometric Classification of Brunnian links}

\author{Sheng Bai}
\address{College of Mathematics and Physics, Beijing University of Chemical Technology, Beijing, 100029, China}
\email{barries@163.com}

\author{Jiming Ma}
\address{School of Mathematical Sciences \\Fudan University\\
	Shanghai 200433, China} \email{majiming@fudan.edu.cn}
\subjclass[2010]{57M25,  57M50}

\keywords{Brunnian links, hyperbolic links, satellite links, JSJ-decomposition}

\thanks{Jiming Ma was partially supported by NSFC 11771088.}

\begin{abstract}
  We study satellite operations on Brunnian links. Firstly, we find two special satellite operations, both of which can construct infinitely many distinct Brunnian links from almost every Brunnian link. Secondly, we give a geometric classification theorem for Brunnian links, characterize the companionship graph defined by Budney in \cite{RB}, and develop a canonical geometric decomposition, which is simpler than JSJ-decomposition, for Brunnian links. The building blocks of Brunnian links then turn out to be Hopf $n$- links, hyperbolic Brunnian links, and hyperbolic Brunnian links in unlink-complements. Thirdly, we define an operation to reduce a Brunnian link in an unlink-complement into a new Brunnian link in $S^3$ and point out some phenomena concerning this operation.
\end{abstract}

\date{June. 25,  2020}
\maketitle

\section{Introduction}
The well-known geometric classification theorem for knots, proved by Thurston \cite{Th}, states that every knot type is in exactly one of the three classes, torus knots, hyperbolic knots, and satellite knots. Similarly, we have a generalization of Thurston's classification for the case of all links. Here by a \emph{knot}, we mean an  oriented circle in the 3-sphere upto ambient isotopies. A motivation of this paper is a refinement of this classification for the case of Brunnian links.

When provided with a link, we may naturally consider its sublinks. The nontrivial sublinks with least components are exactly knots and Brunnian links. In this sense, Brunnian links can be viewed as next in importance to knots for study of links. However, limited works has been done in Brunnian links. This is the first of a series of papers on Brunnian links. We are concerned in this paper with the satellites of Brunnian links and the objective is twofold. The first one is to construct infinitely many new Brunnian links using satellites. The other is to give deep geometric decomposition of Brunnian links by satellite patterns. We therefore present a general definition of Brunnian links.

\begin{defn}\label{def:brunn}
An $n$-component link $L$ in $S^3$ is \emph{Brunnian} if $n>1$, and

(ST):  every $(n-1)$-component sublink is trivial;

(NT):  $L$ is nontrivial.
\end{defn}

This paper has three main parts. Firstly, satellites of links have been defined and generalized in various ways (for instance \cite{RB,DFL,EN}). In this paper we give new names to two specific operations. The standard orientation of meridian and longitude used below will be described in the next section.

\begin{defn}\label{def:sum0}
Let $L$ and $L'$ be links in $S^3$, and $C \subset L$ and $C' \subset L'$ be oriented unknotted components. A homeomorphism $h: S^3 -int N(C') \longrightarrow N(C)$ maps the oriented meridian of $N(C')$  to the oriented longitude of $N(C)$ and maps the oriented longitude of $N(C')$  to the oriented meridian of $N(C)$, where $N(C)$ is the regular neighborhood of $C$. Then the link $(L-C) \cup h(L'-C')$ is the \emph{satellite-sum} of $L(C)$ and $L'(C')$, denoted $L_{C} \dagger_ {C'} L'$.
\end{defn}

\begin{defn}\label{def:tie0}
Let $L_0 \sqcup L^n$ be a link in $S^3$ where $L^n=\sqcup_{i=1}^n C^i$ is an oriented unlink, and $k_i$, $i=1,...,n$, be nontrivial knot types. Let $h: U_n =S^3 - int N(L^n) \longrightarrow S^3$ be an orientation-preserving embedding so that
\begin{align}
   S^3-h(U_n ) \cong \sqcup_{i=1}^n (S^3-N(k_i)), \nonumber
\end{align}
where $h(\partial N(C^i))$ corresponds to $\partial N(k_i)$, and the oriented meridian of $N(C^i)$ maps to the oriented null-homologous curve in $S^3-h(U_n)$ corresponding to the oriented longitude of $N(k_i)$. Then $L=h(L_0 )$ is a \emph{satellite-tie}, denoted $( L_0 , U_n \subset S^3 )\underrightarrow{k_1,...,k_n} L$.
\end{defn}

Satellite-sum and satellite-tie are in general different operations in that they correspond to unknotted and knotted tori in the complements of resulted links respectively. The \emph{splice} of links defined by Budney \cite{RB} includes both of them, and the notion of splice given by Eisenbud and Neuman \cite{EN} generalizes Budney's notion into homology spheres. Focusing on Brunnian links, we prove the following.

\begin{thm}\label{thm:sum0}
(1) Brunnian property is preserved by satellite-sum.

(2) If a torus $T$ in $S^3$ splits a Brunnian link $L$, then $T$ is incompressible, and $L$ is decomposed by $T$ as a satellite-sum of two Brunnian links.
\end{thm}

\begin{thm}\label{thm:tie0}
(1) Suppose $L$ is a satellite-tie of $( L_0 , U_n \subset S^3 )$, that is, $( L_0 , U_n \subset S^3 )\underrightarrow{k_1,...,k_n} L$. Then $L_0$ is Brunnian in $U_n$ if and only if $L$ is Brunnian in $S^3$.

(2) Every knotted essential torus in the complement of a Brunnian link bounds the whole link in the solid torus side.
\end{thm}

We emphasize that one can construct infinitely many distinct Brunnian links by satellite-sum from every Brunnian link, and by satellite-tie from almost every Brunnian link.

By Alexander's Theorem \cite{Ha}, every smooth torus bounds at least one solid torus in $S^3$.
\begin{defn}
Fixing an embedding of 3-manifold $M$ in $S^3$, a smooth torus in $M$ is \emph{knotted} if it bounds only one solid torus in $S^3$, and otherwise, \emph{unknotted}.
\end{defn}

\begin{defn}\label{def:prime0}
A Brunnian link is \emph{s-prime} if it is not Hopf link, and there is no essential unknotted torus in its complement space.
\end{defn}

\begin{defn}\label{defn:untied0}
A Brunnian link is \emph{untied} if there is no essential knotted torus in its complement space.
\end{defn}

Secondly, we classify Seifert-fibred Brunnian links and give the following geometric classification theorem of Brunnian links.

\begin{thm} \label{thm:classification0}
If a Brunnian link $L$ is s-prime, untied, and not a $(2, 2n)-$torus link, then $S^3 - L$ has a hyperbolic structure.
\end{thm}

We then further study the geometric decomposition of Brunnian links. In \cite{RB}, Budney describes a formalism for the JSJ decomposition of link complements by the construction of two graphs associated to a link, JSJ-graph and companionship graph. Briefly speaking, the companionship graph is a partially-directed tree with each edge labeled by a JSJ-decomposition torus, and each vertex labeled by a link whose complement is homeomorphic to the corresponding JSJ-piece. The graph with labels discarded is the JSJ-graph. We will characterize the two graphs of Brunnian links.

\begin{defn}
A partially-directed tree is a \emph{planting tree} if all of its vertices and directed edges form a disjoint union of rooted trees and the endpoints of each non-directed edge are roots.
\end{defn}

\begin{thm} \label{thm:jsj0}
The JSJ-graph of a Brunnian link is a planting tree.
\end{thm}

Moreover, we develop a notation of canonical decomposition for Brunnian links, simpler than the JSJ-decomposition.
If a link is a satellite-sum of certain links, we draw the tree structure of the satellite-sum expressed in terms of the links and removed components on a horizontal plane, and then replace the satellite-tie factors by their representation in Definition \ref{def:tie0}, the arrows drawn upward. We call such representation a \emph{tree-arrow structure}.
If, further, each factor with no arrow is either a hyperbolic Brunnian link or a $(2, 2n)-$torus link with $|n| \ge 2$, and each link in the satellite pattern is a hyperbolic Brunnian link in an unlink-complement, then we call this representation a \emph{Brunnian tree-arrow structure}.

By a Brunnian link in an unlink-complement,  we mean a nontrivial link $L$ in an unlink-complement such that all its  proper sublinks are trivial, see also Section 2. See Figure \ref{fig:treearrow} for an example of tree-arrow structure.

The main result in this part is

\begin{thm}\label{th:cano0}
Brunnian tree-arrow structures correspond bijectively to Brunnian links.
\end{thm}

Lastly, we are not satisfied with the occurrence of hyperbolic Brunnian links in unlink-complements as building blocks in the above theorem. So we define an operation, \emph{untie}, to reduce a Brunnian link in unlink-complement into a Brunnian link in $S^3$. We investigate some complicities of untying.

On the whole, given a Brunnian link in an unlink-complement, one can always untie it into untied, and then decompose it into s-prime factors in finite steps. Proposition \ref{prop:independent} in Section \ref{sec:gdbl} guarantees the resulted links are untied and s-prime. Combining with Theorem \ref{th:cano0} and \ref{thm:classification0}, in a word, we can say $(2, 2n)-$torus links and hyperbolic Brunnian links are the building blocks in the class of Brunnian links in that they generate all Brunnian links by satellite-sum and satellite-tie.

This paper is organized as follows. In Section 2, we present notations and some useful results. In Section \ref{sec:sum} and \ref{sec:tie}, we study satellite-sum and satellite-tie of Brunnian links, and prove Theorem \ref{thm:sum0} and \ref{thm:tie0} respectively. In Section \ref{sec:annuli}, we classify Seifert-fibred Brunnian links and prove Theorem \ref{thm:classification0} by discussion of essential annuli in link complements. We treat geometric decomposition of Brunnian links in Section \ref{sec:gdbl}. We characterize JSJ-graphs (Theorem \ref{thm:jsj0}) and companionship graphs, investigate the uniqueness of decomposition of satellite-sum and satellite-tie, and describe Brunnian tree-arrow structure (Theorem \ref{th:cano0}) by analysis on essential tori in this section. Finally, in Section \ref{sec:untie} we define and discuss untying.

\textbf{Acknowledgement}:
The authors would like to thank Zhiyun Cheng for helpful discussion in early time. The authors also appreciate the very helpful comments of an anonymous referee.

\section{Preliminaries}
In this paper, all objects and maps are smooth. All intersections are compact and transverse. We always consider links in $S^3$ if there is no extra claim. We use $L_i$ to denote a link,  $C_i$ to denote an unknot, $N(\cdot)$ to denote a closed regular neighborhood, $D_i$ to denote an embedded disk, and $A_i$ to denote an annulus.

A link is \emph{trivial} if it is the boundary of mutually disjoint disks. Given a link $L$ in $S^3$, the following statements are equivalent: (i) $L$ is trivial in $S^3$; (ii) $L$ is trivial in $R^3$, which is $S^3$ with a point removed; (iii) $L$ is trivial in a certain 3-ball.

Although the proof of the following proposition is trivial, the consequences of this result are of major importance for us.

\begin{prop}\label{prop:onedisk}
\cite{BW} Let $L$ be a link satisfying (ST) in Definition \ref{def:brunn} and $L_0$ be a proper sublink of $L$. Then $L$ is trivial if and only if $L_0$ bounds mutually disjoint disks not intersecting $L-L_0$.
\end{prop}

Recall that for a knot $K$ in $S^3$, there are two canonical isotopy classes of
curves in $\partial N(K)$, the \emph{meridian} and \emph{longitude} respectively. The meridian is the
essential curve in $\partial N(K)$  that bounds a disc in $N(K)$. The longitude is the essential curve in $N(K)$ that is null-homologous in $S^3-intN(K)$.  We orient the longitude of $N(K)$  similarly to $K$ and orient the meridian so that the longitude and the meridian has link number $+1$ as in \cite{RB}.

To investigate satellite-tie in Section \ref{sec:tie}, it is necessary to define \emph{Brunnian links in unlink-complements}. In a $3$-manifold, a  \emph{link} is a disjoint union of simple closed curves. A link is \emph{trivial} if it is the boundary of mutually disjoint disks. A link in a $3$-manifold is \emph{Brunnian} if it has more than one component, all of its proper sublinks are trivial, and itself is not trivial.

A 3-manifold $U_n$ is an \emph{(n-component) unlink-complement} if it is homeomorphic to $S^3 - int N(L^n)$, where $L^n$ is an $n$-component unlink for $n>0$. For example, if $L \subset S^3$ is a Brunnian link and $L'$ is a proper sublink with more than one component, then $L'$ is a Brunnian link in the unlink-complement $S^3 - int N(L-L')$.

In Section \ref{sec:annuli} we need Thurston's Hyperbolization Theorem. If a compact 3-manifold $M$ with toric boundary is irreducible, $\partial$-irreducible, anannular and atoroidal, then by Thurston's Geometrization for Haken 3-manifolds  \cite{Th},  $M$ admits a unique hyperbolic structure up to isometry.

We refer to \cite{Ha} for notions and details on 3-dimensional topology. Most arguments in Section \ref{sec:gdbl} rely on JSJ-Decomposition Theorem (mainly Torus Decomposition Theorem). The commonly used form of this theorem in our paper is: every oriented prime 3-manifold $M$ contains a canonical collection $\mathcal{T}_{JSJ}$ of disjoint essential tori, such that each component of $M-int N(\mathcal{T}_{JSJ})$ is either atoroidal or Seifert-fibred. Moreover, any maximal collection of disjoint essential tori with no parallel pair contains $\mathcal{T}_{JSJ}$ after isotopy.

We call each component of $M- int N(\mathcal{T}_{JSJ})$ a \emph{JSJ-piece}.

\section{Satellite-sum}\label{sec:sum}
\subsection{Definition and examples}
For simplicity of presentation, we always abbreviate satellite-sum by \emph{s-sum}. We name it as a sum since it is a satellite of both $L$ and $L'$.

Clearly, the s-sum is symmetric for $L(C)$ and $L'(C')$. S-sum is also associative for three links, with respect to the removed components. Accurately speaking, the s-sum of multiple links will form a (non-directed) tree structure. Let us illustrate it with an example.

\begin{example}\label{example:sum}
See Figure \ref{fig:sum}. There are 5 Brunnian links in (1). An s-sum of them, expressed in terms of the links and the removed components, forms a tree structure as shown in (2). The resulted link is (3).
\end{example}

\begin{figure}[htbp]
		  \centering
    \includegraphics[scale=0.18]{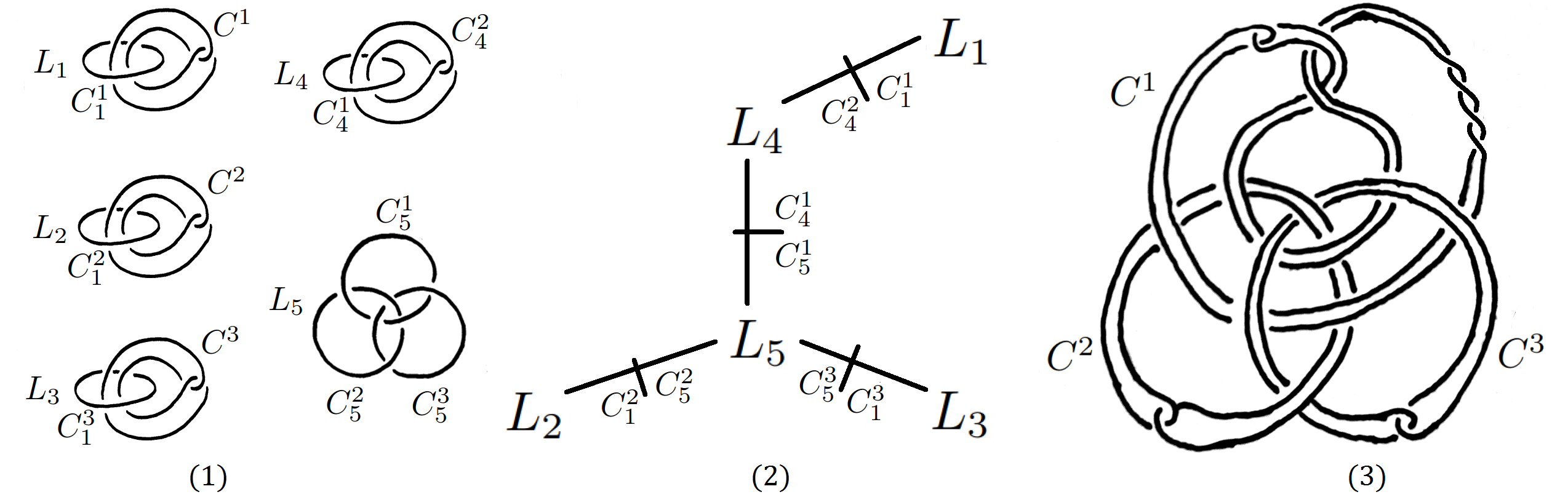}
    \caption{}
	\label{fig:sum}
\end{figure}

\begin{example}\label{example:milnor}
Milnor links (denoted $M^n$ if composed of $n$ components). It is observed from Figure \ref{fig:milnor} that, in general, $M^{n+m-2} = {M^n} _{C_r} \dagger _{C_l}  M^m$, with respect to the rightmost component $C_r$ of $M^n$ and the leftmost component $C_l$ of $M^m$. Notice that $M_3$ is Borromean rings. Performing the decomposition in succession, we obtain that $M_n$ is the s-sum of $n-2$ Borromean rings, and the tree structure is a path with $n-2$ vertices and $n-3$ edges.
\end{example}

\begin{figure}[htbp]
		  \centering
    \includegraphics[scale=0.3]{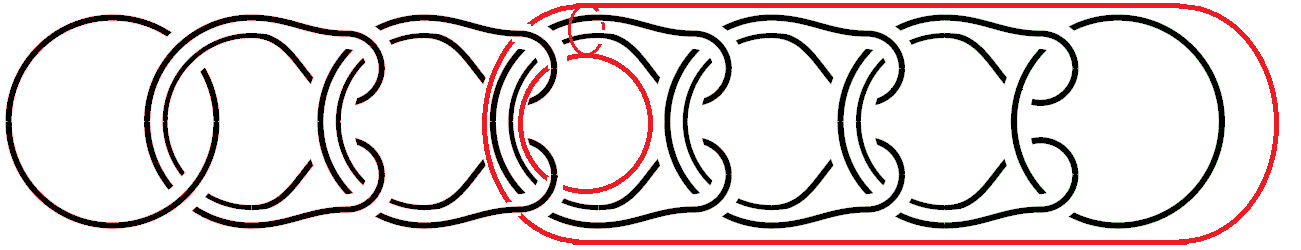}
    \caption{Milnor link: $M^7 = {M^4} _{C_r} \dagger _{C_l}   M^5$}
	\label{fig:milnor}
\end{figure}

Note that Hopf link is the identity in s-sum operations, as the s-sum of any link and Hopf link is itself.

\subsection{The special nature of s-sums of Brunnian links}
Recall from \cite{DR} that a compact set $K$ in a solid torus $V$ is \emph{geometrically essential} in $V$ if every meridian disk of $V$ intersects $K$. The following statements are equivalent: (i) $K$ is geometrically essential in $V$; (ii) there is no 3-ball in $V$ containing $K$; (iii) if $V$ is a standard solid torus in $S^3$, and $C$ is the core of $S^3-V$, then $C$ fails to bound a disk avoiding $K$.

The crucial result in this section is the following proposition.

\begin{prop}\label{prop:sum}
Let $L$ be an unlink, $C$ be an unknot not intersecting $L$, and $L'$ be a link in $N(C)$. If $L \cup L' \subset S^3$ satisfies (ST) in Definition \ref{def:brunn}, then the following statements are equivalent.

(i) $L \cup L'$ is Brunnian.

(ii) $L'$ and $L$ are geometrically essential in $N(C)$ and $S^3 - int N(C)$ respectively.

(iii) $L \cup C$ and $L' \cup C_0$ are Brunnian, where $C_0$ is the core of the solid torus $S^3 - int N(C)$.
\end{prop}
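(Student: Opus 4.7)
The plan is to prove (i)$\Leftrightarrow$(ii) using Theorem~\ref{thm:fundamental} combined with a $\partial N(C)$-intersection-minimization argument, and then to deduce (ii)$\Leftrightarrow$(iii) by reinterpreting the (NT) and (ST) conditions of $L\sqcup C$ and $L'\sqcup C_{0}$ in terms of geometric essentiality. A preliminary remark used throughout: (ST) of $L\sqcup L'$ forces both $L$ and $L'$ to be individually trivial in $S^{3}$, since each is a sublink of a trivial $(n+l-1)$-component sublink.

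For (i)$\Rightarrow$(ii), if say $L'$ is not geometrically essential in $N(C)$ then $L'\subset B$ for some 3-ball $B\subset N(C)$; an innermost-disk argument on $\partial B\cong S^{2}$ pushes the spanning disks of the trivial link $L'$ into $B$, producing disks disjoint from $L$ and contradicting (NT) via Theorem~\ref{thm:fundamental}. For (ii)$\Rightarrow$(i), I would assume $L\sqcup L'$ is trivial with mutually disjoint spanning disks, and isotope them to minimize intersections with $\partial N(C)$. Cut-and-paste removes all inessential intersection circles, so any remaining circles are mutually disjoint and essential, hence parallel on the torus. An innermost sub-disk is then a compressing disk for $\partial N(C)$: if its boundary has the meridian slope of $N(C)$, it is a meridian disk of $N(C)$ disjoint from $L'$, contradicting $L'$ geometrically essential; if longitudinal, it is a meridian disk of $S^{3}-N(C)$ disjoint from $L$, contradicting $L$ geometrically essential. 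If no intersections remain at all, the disks for $L$ (resp.\ $L'$) lie in $S^{3}-N(C)$ (resp.\ $N(C)$), and the standard thickening argument — thicken the disjoint disks to disjoint 3-balls and connect them by a tree of arcs chosen generically inside the solid torus — produces a single 3-ball containing the link, again contradicting geometric essentiality.

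For (ii)$\Leftrightarrow$(iii), note first that since $L$ is trivial, $L\sqcup C$ is nontrivial iff $L$ does not lie in a 3-ball of $S^{3}-N(C)$, iff $L$ is geometrically essential there (and similarly for $L'\sqcup C_{0}$); so the (NT) halves of (iii) are exactly (ii), giving (iii)$\Rightarrow$(ii). For (ii)$\Rightarrow$(iii), by the symmetry interchanging $N(C)$ with $S^{3}-N(C)$ it suffices to show $L\sqcup C$ is Brunnian. Its (NT) and the triviality of the sublink $L$ are already established, so the only remaining check is that each $(L-C_{i})\sqcup C$ is trivial. Starting from the trivial link $(L-C_{i})\sqcup L'$ (given by (ST) of $L\sqcup L'$), I would take disjoint spanning disks $\{F_{k}\}$ of $L-C_{i}$ and rerun the minimization with $\partial N(C)$ inside $S^{3}-L'$. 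The meridian-slope case is again ruled out by $L'$ geometrically essential. If no intersections remain, $\{F_{k}\}\subset S^{3}-N(C)$ and the thickening argument places $L-C_{i}$ in a 3-ball in $S^{3}-N(C)$; if the remaining slope is longitudinal, an innermost sub-disk is a meridian disk of $S^{3}-N(C)$ disjoint from $L-C_{i}$, and cutting $S^{3}-N(C)$ along it again produces a 3-ball containing $L-C_{i}$. In every case $L-C_{i}$ lies in a 3-ball disjoint from $N(C)$, so $(L-C_{i})\sqcup C$ is split into two trivial pieces, hence trivial.

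The main obstacle is the bookkeeping in the innermost-circle analysis: tracking which side of $\partial N(C)$ each sub-disk occupies, matching the boundary slope to that side so that a genuine meridian disk is produced on the correct side, and checking that the cut-and-paste isotopies are supported in 3-balls disjoint from $L\cup L'$ so the modified disks remain in the link complement. A secondary but necessary ingredient is the lemma that a link bounding mutually disjoint disks inside a solid torus is contained in a 3-ball there, which I would justify by thickening the disks to disjoint balls and connecting them by a generic spanning tree of arcs, whose regular neighborhood is compact, contractible, with spherical boundary, and hence a 3-ball.
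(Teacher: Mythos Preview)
Your proposal is correct and follows essentially the same approach as the paper: both arguments hinge on an innermost-circle analysis of the intersection of spanning disks with $\partial N(C)$, together with the equivalence between geometric essentiality in a solid torus and not being contained in a 3-ball, and both invoke Theorem~\ref{thm:fundamental} for the (NT) contradictions. The only difference is organizational---the paper runs the cycle (i)$\Rightarrow$(ii)$\Rightarrow$(iii)$\Rightarrow$(i) while you prove the two bi-implications (i)$\Leftrightarrow$(ii) and (ii)$\Leftrightarrow$(iii)---so your (ii)$\Rightarrow$(i) duplicates the work of the paper's (iii)$\Rightarrow$(i), but the underlying ideas are identical.
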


\begin{proof}
Before beginning to prove, we note that the positions of $L$ and $L'$ in the proposition are symmetric.

(i) $\Rightarrow$ (ii). We need only show that $L'$ is geometrically essential in $N(C)$. Assume otherwise.  Then there is a meridian disk $D$ of $N(C)$ not intersecting $L'$. In other words, $L'$ lies in a 3-ball subset of $N(C)$. Notice that $L \cup L'$ satisfies (ST). It follows that $L'$ bounds mutually disjoint disks in this 3-ball. According to Proposition \ref{prop:onedisk}, $L \cup L'$ is trivial.

(ii) $\Rightarrow$ (iii). We need only show that $L \cup C$ is Brunnian. First, we show that $L \cup C$ satisfies (ST). Let $C_i , i=1, ... ,n$ be the components of $L$. Since $L$ is trivial, it suffices to prove $\cup_{i=2}^n C_i \cup C$ is trivial without loss of generality.

Since $L \cup L'$ satisfies (ST), then $\sqcup_{i=2}^l C_i$ can bound disjoint disks $\sqcup_{i=2}^n D_i$ not intersecting $L'$. We may assume $\cup_{i=2}^n D_i \cap \partial N(C)$ is a disjoint union of circles. Delete all circles inessential on $\partial N(C)$ from innermost by surgery and denote the revised disks  still by $D_i$'s.

{\sc Case~1}. $\cup_{i=2}^n D_i \cap \partial N(C)= \emptyset$, namely, $\cup_{i=2}^n D_i \subset
S^3 - N(C)$. Then $\cup_{i=2}^n D_i$ is not geometrically essential in $S^3 - intN(C)$. Therefore $C$ also bounds a disk disjoint from $\cup_{i=2}^n D_i$ and thus $\cup_{i=2}^n C_i \cup C$ is trivial.

{\sc Case~2}.  $\cup_{i=2}^n D_i \cap \partial N(C) \neq \emptyset$. Then we choose an intersection circle $\tilde{c}$ innermost in some $D_i$, bounding $\tilde{D}$ on $D_i$. It follows that $\tilde{D}$ is a meridian disk of either $N(C)$ or $S^3 - intN(C)$. If $\tilde{D}$ is a meridian disk of $N(C)$, then $L'$ is not geometrically essential in $N(C)$, contradicting (ii). Hence, $\tilde{D}$ is a meridian disk of $S^3 - N(C)$. Then $\sqcup_{i=2}^n C_i$ is contained in a 3-ball, and thus bounds disjoint disks $\sqcup_{i=2}^n \bar{D_i}$ in it since $L \cup L'$ satisfies (ST). Clearly $C$ also bounds a disk disjoint from $\cup_{i=2}^n \bar{D_i}$ and thus $\cup_{i=2}^n C_i \cup C$ is trivial.

Now we show that $L \cup C$ is nontrivial. Assume $C$ bounds a disk disjoint from $L$, then there exists a meridian disk in $S^3 - intN(C)$ disjoint from $L$, contradicting that $L$ is geometrically essential in $S^3 - intN(C)$.

(iii) $\Rightarrow$ (i). Assume for contradiction that $L \cup L'$ is trivial. Then there is a disk $D_1$ bounded by a component of $L$, say $C_1$, disjoint from $(L-C_1) \cup L'$. We may assume $D_1 \cap \partial N(C)$ is disjoint union of circles. Delete all circles inessential on $\partial N(C)$ from innermost by surgery on $D_1$ and denote the revised disk still by $D_1$.

{\sc Case~1}. $D_1 \cap \partial N(C)= \emptyset$. Then $C_1$ bounds $D_1$ is disjoint from $(L-C_1) \cup C$, contradicting that $L \cup C$ is Brunnian.

{\sc Case~2}. $D_1 \cap \partial N(C) \neq \emptyset$. Then all the essential circles are parallel on $\partial N(L)$. Choose a circle $\tilde{c}$ innermost in $D_1$, bounding $\tilde{D} \subset D_1$. Then $\tilde{D}$ is either in $N(C)$ or $S^3 - intN(C)$, as a meridian disk. If $\tilde{D}$ is a meridian disk of $S^3 - intN(C)$, then $C$ can bound a disk disjoint from $L$, contradicting that $L \cup C$ is Brunnian. A similar argument shows that $\tilde{D}$ can not be a meridian disk of $N(C)$.
\end{proof}

An immediate consequence of this proposition is

\begin{thm}\label{thm:sum}
(1) Brunnian property is preserved by s-sum.

(2) If a torus $T$ in $S^3$ splits a Brunnian link $L$, then $T$ is incompressible, and $L$ is decomposed by $T$ as an s-sum of two Brunnian links.
\end{thm}

\begin{proof}
(1). Let $L$ and $L'$ be Brunian links, $C_i , i=1, ... ,n$ be the components of $L$, and $C'$ be a component of $L'$. Consider $L_{C_1} \dagger_ {C'} L'$. Applying the fact that (iii) $\Rightarrow$ (i) in Proposition \ref{prop:sum} to $L-C_1$ and $L'-C'$, we only must show that $(L-C_1) \cup (L'-C')$ satisfies (ST). Without loss of generality, it is sufficient to show that $\cup_{i=3}^n C_i \cup (L'-C')$ is trivial.

Since $L$ satisfies (ST), the sublink $\sqcup_{i=3}^n C_i$ is trivial in $S^3 - N(C_1)$. In particular, $\sqcup_{i=3}^n C_i$ bounds mutually disjoint disks in a 3-ball $B \subset S^3 - N(C_1)$. Therefore $L'-C'$ is contained in the 3-ball $S^3 - intB$ and thus trivial in it since $L'$ satisfies (ST).

(2). It follows from Theorem~\ref{thm:tie}(2) that $T$ is unknotted. Apparently the conclusion comes from the fact that (i) $\Rightarrow$ (iii) in Proposition \ref{prop:sum}.
\end{proof}

\section{Satellite-tie}\label{sec:tie}
\subsection{Definition and examples}\label{subsection:tiedefandexample}
For simplicity of presentation, we always abbreviate satellite-tie by \emph{s-tie}. We call $( L_0 , U_n \subset S^3 )$ in Definition \ref{def:tie0} an \emph{s-pattern} of $L$.

As an example, consider one of the simplest 2-component Brunnian links $L_0$ in a solid torus as shown in Figure \ref{fig:treborromean} (i). Notice that $U_1$ is a solid torus. The s-tie $( L_0 , U_1 \subset S^3 )\underrightarrow{k} L$, where $k$ is a trefoil, is a Brunnian link, as shown in Figure \ref{fig:treborromean} (ii). Varying the knot type of $k$ gives infinitely many distinct Brunnian links.

\begin{figure}[htbp]
		  \centering
    \includegraphics[scale=0.2]{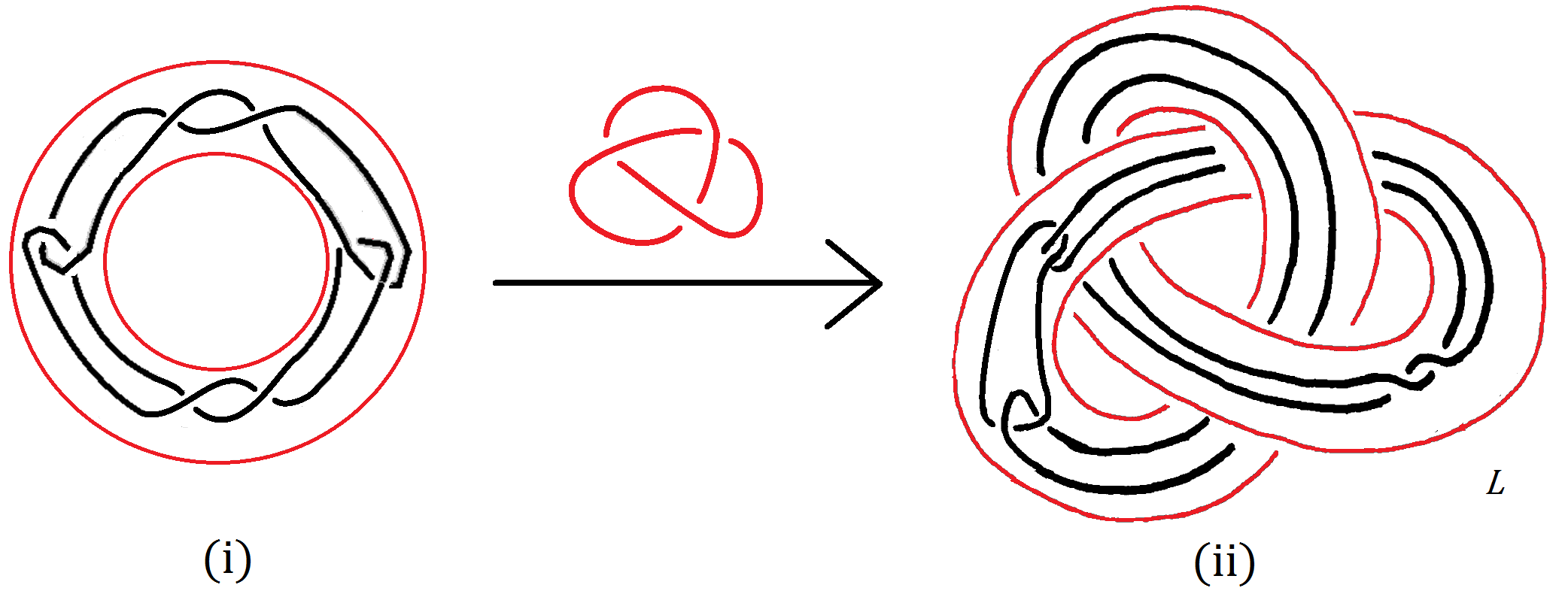}
    \caption{}
    \label{fig:treborromean}
\end{figure}

\subsection{Construction and the special nature of s-ties for Brunnian links}
The key idea of the proof of the main theorem in this section is the following Lemma \ref{lem:tie} (1).

\begin{lem}\label{lem:tie}
Let $K$ be a nontrivial knot, $L^r$ be an $r$-component unlink in $V=N(K)$ and $h$ be a homeomorphism from $U_{r+1}$ to $V - int N(L^r)$ where $U_{r+1}$ is an unlink-complement. Then for a link $L\subset U_{r+1}$,

(1) $L$ is  trivial in $U_{r+1}$ if and only if $h(L)$ is trivial in $S^3 - N(L^r)$;

(2) $L$ is Brunnian in $U_{r+1}$ if and only if $h(L)$ is Brunnian in $S^3 - N(L^r)$.
\end{lem}

\begin{proof}
(1). The ``only if '' implication is trivial, while the ``if '' implication follows when $h(L) \cup L^r$ is not geometrically essential in $V$, owing that $h(L) \cup L^r$ is contained in a 3-ball subset of $V$ and the triviality of $h(L)$ will be irrelevant with $V$.

Now suppose that $h(L) \cup L^r$ is geometrically essential in $V$ and $L=\sqcup_{i=1}^n C_i$ bounds mutually disjoint disks $\sqcup_{i=1}^n D_i$ in $S^3 - N(L^r)$. Then $\cup_{i=1}^n D_i$ cannot be contained in $V$ since otherwise $h(L) \cup L^r$ would become trivial in $V$ and then not geometrically essential in $V$. We may assume $\cup_{i=1}^n D_i \cap \partial V$ is a disjoint union of circles. Eliminate all circles inessential on $\partial V$ by surgery from innermost and denote the revised disks  still by $D_i$'s.

Choose an intersection circle $\tilde{c}$ essential on $\partial V$ and innermost on some $D_k$. Then $\tilde{c}$ bounds a disk $\tilde{D}$ on $D_k$. We claim this contradicts the assumptions. In fact, if $\tilde{D} \subset V$, then $h(L) \cup L^r$ is not geometrically essential in $V$. If $\tilde{D} \subset S^3 - intV$ then $K$ would be an unknot.

(2). Clause (1) implies that $L$ is not trivial in $U_{r+1}$ if and only if $h(L)$ is not trivial in $S^3 - N(L^r)$. Applying (1) to all the $(n-1)$-component sublinks of $L$, we obtain that all proper sublinks of $L$ are trivial in $U_{r+1}$ if and only if all proper sublinks of $h(L)$ are trivial in $S^3 - N(L^r)$.
\end{proof}

\begin{thm}\label{thm:tie}
(1) Suppose $( L_0 , U_n \subset S^3 )\underrightarrow{k_1,...,k_n} L$. Then $L_0$ is Brunnian in $U_n$ if and only if $L$ is Brunnian in $S^3$.

(2) Every knotted essential torus in the complement of a Brunnian link bounds the whole link in the solid torus side.
\end{thm}

\begin{proof}
(1). Let $h: U_n \longrightarrow S^3$ be the re-embedding in Definition \ref{def:tie0}, and $\sqcup_{r=1}^n  H^r = S^3 -inth(U_n)$ be the disjoint union of the knot complement spaces. Set $U_i= S^3 - (\sqcup_{r=1}^i  H^r)$ for $i= 0,1,...,n$. Then applying Lemma \ref{lem:tie}(2) for $i= 1,...,n$ in succession leads to the required conclusion.

(2). Given a Brunnian link $L$, suppose $T$ is a knotted torus in the complement space splitting $L$. Then the proper sublink in the solid torus $V$ bounded by $T$ is trivial in $S^3$ since $L$ satisfies (ST), and thus trivial in $V$ by Lemma \ref{lem:tie}(1). It follows from Proposition \ref{prop:onedisk} that $L$ is trivial. This is a contradiction.
\end{proof}

The ``only if'' implication of Theorem \ref{thm:tie}(1) gives a general method to construct infinitely many Brunnian links.

Theorem \ref{thm:tie}(2) implies that once there is a knotted essential torus in the complement of a Brunnian link, the link is an s-tie.

To construct infinitely many new Brunnian links, it is necessary to generalize the notion of geometrical essentiality into unlink-complements.

\begin{defn}
A compact set $K$ in an unlink-complement $U$ is \emph{geometrically essential} if $\partial U$ is incompressible in $U-K$.
\end{defn}

It is evident that $K$ is geometrically essential in the unlink-complement $U_n =S^3 - int N(\sqcup_{i=1}^n  C_i)$ if and only if for each $i$, $C_i$ fails to bound a disk avoiding $K\cup_{j \ne i} C_j$.

For example, if $L \subset S^3$ is a Brunnian link and $L'$ is a proper sublink, then $L-L'$ is geometrically essential in the unlink-complement $S^3 - int N(L')$.

We now see, given a geometrically essential Brunnian link in an unlink-complement $( L_0 , U_n \subset S^3 )$, one can always construct infinitely many distinct Brunnian links $( L_0 , U_n \subset S^3 )\underrightarrow{k_1,...,k_n} L$ by taking each $k_i$ to be every nontrivial knot type.

\subsection{Brunnian links in unlink-complements}\label{subsec:Bliuc}
We conclude this section with some observations on Brunnian links in unlink-complements. The diversity of Brunnian links in unlink-complements can be demonstrated by a typical example.

\begin{example}\label{example:unlinkcomplement}
The link $L$ in Figure \ref{fig:unlinkcomplement} (i) is Brunnian. Consider the red circles $C_1, C_2, C_3, C_4$ in Figure \ref{fig:unlinkcomplement} (ii). Then L becomes a Brunnian link in the unlink-complements $S^3 - N(C_i)$, $S^3 - N(C_i \sqcup C_j)$, $S^3 - N(C_i \sqcup C_j \sqcup C_k)$, $S^3 - N(C_1 \sqcup C_2 \sqcup C_3 \sqcup C_4)$ and so on. We can even ``twist'' or ``tie a knot'' of any type at the middle part of $C_4$. We therefore obtain infinitely many distinct Brunnian links in unlink-complements from $L$.
\end{example}

\begin{figure}[htbp]
		  \centering
    \includegraphics[scale=0.25]{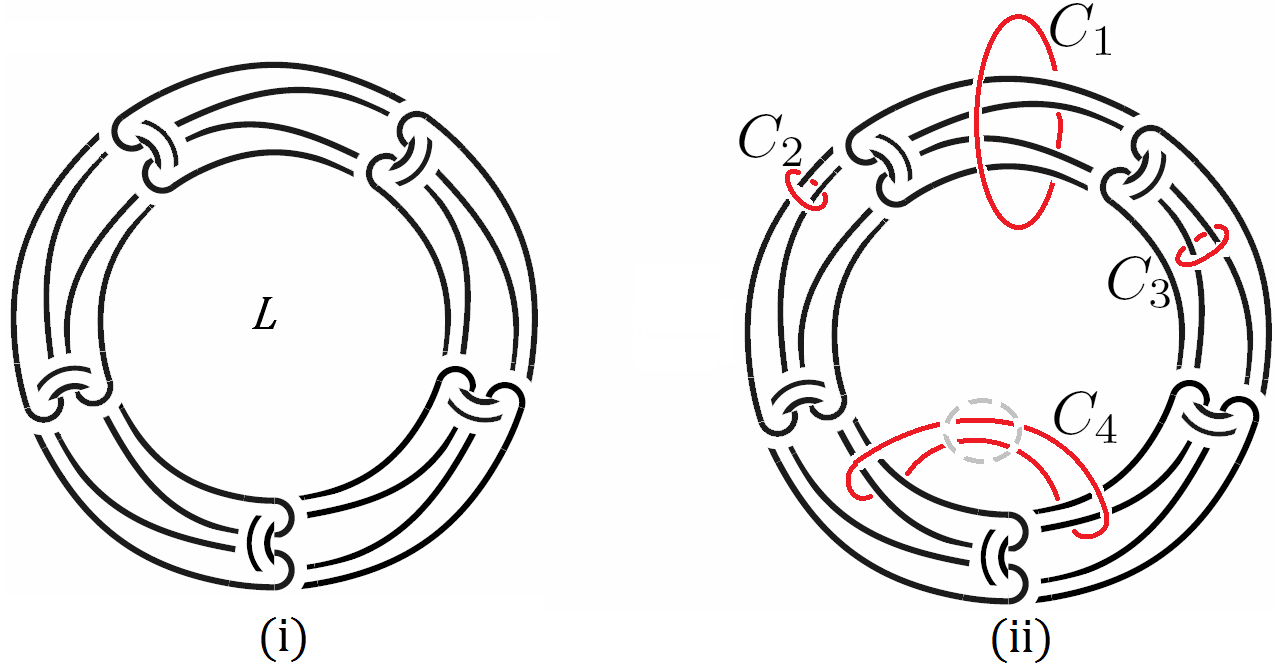}
    \caption{}
    \label{fig:unlinkcomplement}
\end{figure}

Before ending this section we show that one can always construct Brunnian links in infinitely many distinct unlink-complements, given a geometrically essential Brunnian link in an unlink-complement.

\begin{prop}\label{prop:unlinkcomplement}
Let $L$ be a geometrically essential Brunnian link in $U^n= S^3 - intN(L^n)$, where $L^n = \sqcup_{i=1}^n  C^i$ is an unlink, $L_1$, ..., $L_n$ be Brunnian links, and $C_i$ be a component of $L_i$ for each $i$. Let $\tilde{L}$ be an s-sum of $L^n $ and $L_1$,..., $L_n$, with respect to all pairs of $C^i$ and $C_i$. Then $L$ is a geometrically essential Brunnian link in the unlink-complement $\tilde{U} = S^3 -intN(\tilde{L})$.
\end{prop}

\begin{proof}
It is easy to check that $\tilde{L}$ is an unlink. Suppose $U_i$ is the complement space of $L_i$ for each $i$. Then $\tilde{U}$ is a gluing of $U^n$ and all of $U_i$'s via boundaries.

First we show that $L$ is Brunnian in $\tilde{U}$. Since $U^n \subset \tilde{U}$, $L$ satisfies (ST) in $\tilde{U}$. We prove $L$ is not trivial in $\tilde{U}$ by negation. Assume $L$ bounds mutually disjoint disks in $\tilde{U}$. Then the disks cannot be contained in $U^n$ since $L$ is not trivial in $U^n$. Thus some disk intersects $\partial N(L^n)$. We may isotope $N(L^n)$ to be thin enough in $S^3$ so that the disks intersect $N(L^n)$ only in their meridian disks. This contradicts the fact that $L_i$'s are nontrivial.

It remains to show that $L$ is geometrically essential in $\tilde{U}$. Assume otherwise. Without loss of generality let $C$ be a component of $L_1 -C_1$ bounding a disk in $\tilde{U}$. Then the disk cannot be isotoped into $U_1$ since $L_1$ is nontrivial. Thus the disk must contain a subdisk $D$ as a meridian disk of $S^3 - intN(C^1)$. If $ D \subset U^n$, then $L$ is not geometrically essential in $U^n$. Otherwise, we may isotope $N(L^n - C^1)$ to be thin enough so that $D$ intersects $N(L^n - C^1)$ only in their meridian disks, contradicting that other $L_i$'s are nontrivial.
\end{proof}

\section{Brunnian links with essential annuli in their complements}\label{sec:annuli}
In this section we will determine Brunnian links in $S^3$ with Seifert-fibered complements, and then prove the geometric classification theorem for Brunnian links.

We will call the $(2, 2n)-$torus link to be \emph{Hopf $n$-link}. Before proceeding further, we note that Hopf $n$-link has Seifert-fibered complement where $n$ is a nonzero integer. In addition, Hopf $n$-link with $|n| \ge 2$ is s-prime and untied because the complement space is atoroidal.

\begin{prop}\label{prop:annulus}
The Brunnian links with essential annulus joining different boundary components in their complements are Hopf $n$-links.
\end{prop}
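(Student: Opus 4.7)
The plan is to examine an essential annulus $A$ in the link exterior $E(L):=S^3\setminus\mathrm{int}\,N(L)$ and case-split on where $\partial A$ meets $\partial N(L)$. The overall strategy is to feed $A$ into the already-established satellite-sum and satellite-tie theorems (Theorems \ref{thm:sum1}(2) and \ref{thm:tie1}(2)) to force a structural splitting of $L$, and then to show that such a splitting is incompatible with both the Brunnian property and the essentiality of $A$ unless $L$ is Hopf-like.

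Preliminary reductions: by (ST), every individual component $C_i$ is unknotted; when $n\geq 3$, every two-component sublink is trivial, hence split, so every pairwise linking number $\mathrm{lk}(C_i,C_j)$ vanishes and every preferred longitude $\lambda_i$ is null-homologous in $E(L)$. These give strong constraints on the possible boundary slopes of any orientable essential surface in $E(L)$, and allow homological identities of the form $[\partial A]=0$ in $H_1(E(L);\mathbb{Z})$ to pin down the slopes of $\partial A$ almost exactly.

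In the case where $\partial A$ lies on two distinct components $C_i,C_j$, the identity $[\partial A]=0$ combined with vanishing linking numbers (when $n\geq 3$) forces both boundary slopes to be longitudinal; the annulus then realizes a parallelism between two unknotted components, and a careful analysis, using that $L\setminus(C_i\cup C_j)$ bounds disjoint disks by (ST) that can be placed in minimal position against $A$, produces a contradiction with (NT) unless $n=2$. In the residual $n=2$ situation, a direct slope analysis on the two boundary tori identifies $L$ as a Hopf $n$-link. In the case where $\partial A$ lies on a single component $C_i$, cap $A$ with a subannulus $A^{\ast}\subset\partial N(C_i)$ to form a closed torus $T\subset S^3$. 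If $T$ is knotted, Theorem \ref{thm:tie1}(2) puts all of $L$ on the solid-torus side of $T$, so $A$ must be $\partial$-parallel, contradicting essentiality. If $T$ is unknotted and splits $L$, Theorem \ref{thm:sum1}(2) gives an s-sum decomposition, and tracking $A$ inside the factor containing it shows this forces both factors to be Hopf-like, so $L$ itself is Hopf. The remaining possibility that $T$ is unknotted and fails to split $L$ is handled by showing any compression of $T$ propagates to a compression of $A$.

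The main obstacle is the single-component boundary case: the auxiliary torus $T$ is not a priori essential---it can compress through $N(C_i)$ or be $\partial$-parallel in $E(L)$---so the satellite-sum/tie machinery cannot be invoked for $T$ without care. The crux is to show that any failure of essentiality of $T$ propagates back to a failure of essentiality of $A$, exploiting that $T$ and $A$ differ only by the subannulus $A^{\ast}\subset\partial N(C_i)$ and that essential compressing disks for $T$ can be isotoped to avoid $A^{\ast}$. A secondary difficulty is the residual $n=2$ subcase with two boundary components, where the vanishing-linking-number argument degenerates; there one must explicitly classify essential annuli cobounded by two unknotted components of a Brunnian link, using the solid-torus structure of the exterior of an unknot, to identify $L$ as a Hopf $n$-link.
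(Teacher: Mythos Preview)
Your approach diverges from the paper's in two ways. First, the paper never invokes the satellite theorems: its proof is a direct slope and linking-number computation on the two boundary tori, followed (in the residual case $lk(C_1,C_2)=1$ with slopes $(p,q)$ and $(q,p)$) by a concrete meridian-disk intersection argument inside the solid torus $S^3 \setminus N(C_1)$ that forces $C_2$ to be its core. Second, the paper treats only annuli whose two boundary circles lie on \emph{distinct} tori, and it does not separate off $n\geq 3$: for $n\geq 3$ every pairwise linking number vanishes, which in the paper's notation forces $k=0$ in Subcase~1.1 and immediately gives a contradiction with (NT). Your $n\geq 3$ argument is that same step in different language, while your $n=2$ two-tori case is precisely where the paper does its real work; by deferring that to ``a direct slope analysis'' you are deferring most of the content of the proof.

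Your single-torus case has a genuine gap, and it is not where you locate it. Suppose $T=A\cup A^{\ast}$ is knotted and essential; Theorem~\ref{thm:tie1}(2) does force all of $L$ into the solid-torus side $V_T$, but this does \emph{not} make $A$ boundary-parallel: the knot exterior $S^3\setminus V_T$ lies in $E(L)$ and is bounded by all of $T=A\cup A^{\ast}$, not by $A$ together with a subannulus of $\partial N(L)$, so it is not a product region $A\times I$ realizing a $\partial$-parallelism of $A$. Likewise, when $T$ is unknotted and splits $L$, the annulus $A$ sits on the decomposing torus itself, not properly inside either s-sum factor's exterior, so there is nothing to ``track inside the factor containing it'' and no essential annulus is produced in a factor. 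To dispose of the single-torus case you would need a different mechanism---either an argument that such an annulus cannot occur for a Brunnian link, or a reduction to the two-tori case---and neither is supplied. (The paper simply does not address this case.)
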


\begin{proof}
Let $L=\cup_{i=1}^k C_i$ be a Brunnian link. Set $T_i = \partial N(C_i)$ for $i=1, 2$. Suppose $A \subset S^3 - intN(L)$ is an essential annulus joining essential curves $l_1$ on $T_1$ and $l_2$ on $T_2$.

First, we discuss the types of $l_1$ and $l_2$ on the two tori by considering the linking number $n=lk(C_1 ,C_2)$. Suppose $l_1$ is $(p, q)$-curve on $T_1$ and $l_2$ is $(p', q')$-curve on $T_2$. Notice that $l_1$ and $l_2$ are isotopic in the complement of $L$. They cannot both be $(0, 1)$-curves, i.e. meridians of $N(C_1)$ and $N(C_2)$, since these meridians are in different homology classes of $S^3 - intN(L)$. We have
\begin{align*}
lk(l_1, C_1)=q,  lk(l_2, C_1)=np';\\
lk(l_1, C_2)=np, lk(l_2, C_2)=q' .
\end{align*}

This implies that
\begin{align*}
q =np',    q'=np.
\end{align*}
We discuss the link type of $L$ by cases.

{\sc Case~1}. $|p|=1$. Then since $l_1$ is an unknot, so does $l_2$. Thus we have $|p'|=1$ or $|q'|=1$.

{\sc Subcase~1.1}. $|p'|=1$. Then $l_i$ is a $\pm (1, n)$-curve on $T_i$ for $i=1, 2$, and thus $C_1$ is parallel to $C_2$. If $n=0$, then $L - C_1$ is trivial would implies $L$ is trivial. So $n\neq 0$ and $L=C_1 \cup C_2$ is the Hopf $n$-link.

{\sc Subcase~1.2}. $|q'|=1$. Then $n=\pm 1$, $l_1$ is a $\pm (1, q)$-curve and $l_2$ is a $\pm (q, 1)$-curve.

{\sc Case~2}. $|q|=1$. Then an argument similar to the one used in Case 1 shows that $n=\pm 1$, $l_1$ is a $\pm (p, 1)$-curve, and $l_2$ is a $\pm (1, p)$-curve.

{\sc Case~3}. $pq=0$. Then either $l_1$ is $(0, 1)$-curve and $l_2$ is $(1, 0)$-curve or verse visa. In either subcase, $L$ is the Hopf link.

{\sc Case~4}. $|p|, |q| >1$. Then $l_1$ is a torus knot. Since $l_2$ is isotopic to $l_1$, by classification of torus knots, $l_2$ must be either $\pm(p, q)$-curve or $\pm(q, p)$-curve on $T_2$. The first subcase is impossible. In the latter subcase, $n=\pm 1$.

In summary, we have proved that $L$ is Hopf $n$-link unless $n=\pm 1$, $l_1$ is a $(p, q)$-curve, and $l_2$ is a $\pm (q, p)$-curve provided that $pq \ne 0$.

Now, without loss of generality, suppose $L=C_1 \cup C_2$, $n= 1$, $l_1$ is $(p, q)$-curve on $T_1$, and $l_2$ is $(q, p)$-curve on $T_2$ provided that $pq \ne 0$. We will prove $L$ is the Hopf link.

Set $V_1 =S^3 - intN(C_1)$. Then $A \cup N(C_2) \subset V_1$. Choose a meridian disk $D$ of $V_1$ such that $\partial D \cap l_1$ consists of $q$ points. Then $D \cap T_2$ is a disjoint union of circles. First, we eliminate all circles inessential on $T_2$ from innermost by isotopy of $T$. Then the remained circles, denoted by $\sqcup_{i=1}^r \bar{C_i}$, have to be meridians of $N(C_2)$ since they are parallel on $T_2$ and an innermost one on $D$ bounds a meridian disk of $N(C_2)$. Next, consider $\partial D \cap A$, a disjoint union of circles and proper arcs.

We claim that each proper arc cannot have both endpoints on $\partial D$. Assume otherwise. Then such an arc $\alpha$, outermost on $A$, cuts off a disk on $A$. We can isotope $D$ to push forward this disk to eliminate $\alpha$, contradicting that the number of $\partial D \cap \partial A$ has achieved minimum.

Therefore, the circles are inessential on $A$ and can be eliminated from innermost by isotopy of $A$. Using the same argument as in the previous paragraph, each proper arc with both endpoints on some $\bar{C_i}$ can also be eliminated.

It remains to eliminate each proper arc connecting two different $\bar{C_i}$'s. Start from such an arc $\beta$ outermost on $A$. Then it cuts off a disk $D_\beta$ on $A$. An isotopy of a neighborhood on $D$ of $\beta$ to push $\beta$ across $D_\beta$ eliminates $\beta$. We need to eliminate the occurred circles inessential on $T_2$. Now $D \cap A$ is a disjoint union of proper arcs each of which connects $\partial D$ and some $\bar{C_i}$'s.

Notice that $L_2$ is a $(q, p)$-curve. On each $\bar{C_i}$ the number of endpoints is no less than $q$. So there is only one $\bar{C_i}$ and thus $C_2$ is the core of $V_1$. It follows that $L$ is the Hopf link.
\end{proof}

\begin{prop}\label{prop:seifert}
The only Brunnian links with Seifert-fibred complements are Hopf $n$-links.
\end{prop}

\begin{proof}
Let $L$ be a Brunnian link such that $S^3 - intN(L)$ is a Seifert manifold with singular fibers $f_i, i=1,...,t$. We only need to find an essential annulus in $S^3 - intN(L)$. Let $p: S^3 - N(L \cup_{i=1}^t f_i) \rightarrow B$ be the circle bundle. Let $T_i = \partial N(C_i)$ consist of fibers for $i=1, 2$ where $C_i$'s are components of $L$. Choose a simple arc $\alpha$ in $B$ joining $p(T_1)$ and $p(T_2)$. Then $A=p^{-1} (\alpha)$ is an annulus in $S^3 - intN(L)$ joining essential curves on $T_1$ and on $T_2$. Clearly $A$ is essential for otherwise $C_1$ would be a trivial component.
\end{proof}

We point out that Burde and Murasugi \cite{BM} (also Budney \cite[Section 3]{RB}) classified links in $S^3$ with Seifert-fibered complements. We can also use their result to obtain Proposition \ref{prop:seifert}.

We are now in a position to prove the geometric classification result for Brunnian links.

\begin{thm} \label{thm:classification}
If a Brunnian link $L$ is s-prime, untied, and not a Hopf $n$-link, then $S^3 - L$ has a hyperbolic structure.
\end{thm}

\begin{proof}
The complement space of a Brunnian link is irreducible and $\partial$-irreducible by Proposition \ref{prop:onedisk}. Then the theorem follows immediately from Proposition \ref{prop:annulus} and Thurston's Hyperbolization Theorem.
\end{proof}

The next proposition will be used in the next section.

\begin{prop}\label{prop:annulus2}
Let $L$ be a Brunnian link in an unlink-complement $U$. Then in the complement space $U-intN(L)$, there is  no essential annulus joining $\partial U$ and $\partial N(L)$, and thus $U-intN(L)$ is not Seifert-fibered.
\end{prop}

\begin{proof}
This proposition can be proved in the same way as shown in Proposition \ref{prop:annulus} and Proposition \ref{prop:seifert}.
\end{proof}

\section{Geometric decomposition of Brunnian links}\label{sec:gdbl}
In this section, we will characterize JSJ-graphs and develop a notation of canonical geometric decomposition for Brunnian links.

\subsection{JSJ-graphs of Brunnian links}\label{subset:JSJ}
We will use the terminologies of \cite{RB} in this subsection.

\begin{defn}
A partially-directed tree is a \emph{planting tree} if all of its vertices and directed edges form a disjoint union of rooted trees and the endpoints of each non-directed edge are roots.
\end{defn}

We sketch a 3-dimensional planting tree in Figure \ref{fig:planting}.

\begin{figure}[htbp]
		  \centering
    \includegraphics[scale=0.3]{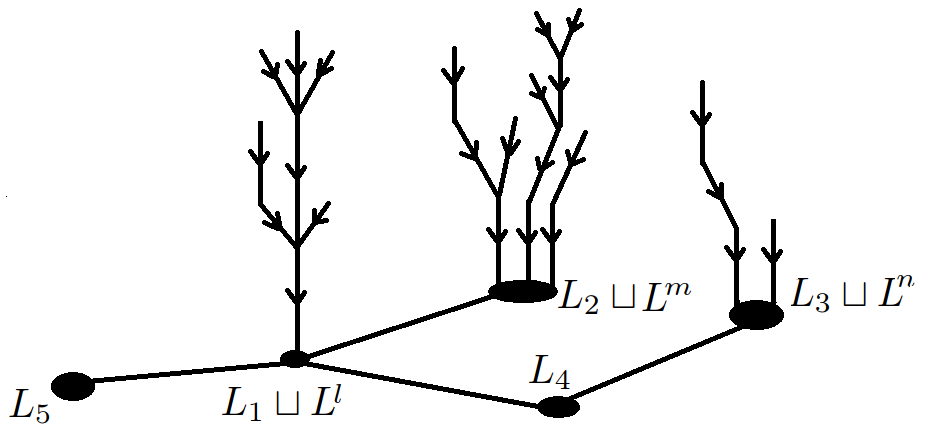}
    \caption{A 3-dimensional graph of planting tree.}
    \label{fig:planting}
\end{figure}

\begin{thm} \label{thm:jsj}
The JSJ-graph of a Brunnian link is a planting tree.
\end{thm}

\begin{proof}
This follows easily from Theorem \ref{thm:tie} (2), the fact that every unknotted essential torus splits the link, and that the JSJ-graph of a knot is a rooted tree \cite{RB}.
\end{proof}

Specifically, in the companionship graph of a Brunnian link \cite{RB}:

(1) each leaf incident to a directed edge is labeled by a knot; each vertex neither a leaf nor a root is labeled by a KGL \cite{RB};

(2) the directed edges terminating to a vertex correspond together to a trivial sublink of the link labelling this vertex.

Moreover, as will be shown at the end of this section, we have

(3) Each root is labeled by a Hopf $n$-link, a hyperbolic Brunnian link, or a hyperbolic Brunnian link in unlink-complement. In accurate, if the pattern here is $( L_1 , U_l \subset S^3 )$ and $U_l = S^3 - int N(L^l)$, then the root is labeled by $L_1 \cup L^l$.

It is not difficult to identify that companionship graphs satisfying the above properties correspond bijectively to Brunnian links.

\subsection{Decomposition of s-sum and s-tie and their relationship}\label{subset:fine}
We now turn to investigations on decomposition of s-sum and s-tie of Brunnian links.

First consider the s-sum decomposition. It is not surprising that

\begin{prop}\label{prop:sumtree}
Every Brunnian link, except Hopf link, can be represented uniquely as an s-sum of s-prime Brunnian links, with respect to the pairs of removed components.
\end{prop}

In other words, the s-sum tree structures, expressed in terms of s-prime Brunnian links, correspond bijectively to Brunnian links, except Hopf link. We point out that the conclusion does not hold for general links in view of Seifert-fibred links such as key-chain links \cite{RB}.

We would rather prove it directly than employ the Torus Decomposition Theorem. A direct proof will be given in Appendix and a proof by Torus Decomposition Theorem will be outlined in the next subsection. We will deal with Proposition \ref{prop:fine} and Lemma \ref{lem:isotopy} in the same manner.

Next, to explore the uniqueness of s-tie decomposition, it is necessary to note a special class of knotted essential tori in link complements.

\begin{defn}\label{def:fine}
For a link, a knotted essential torus $T_0$ in the link complement is a \emph{fine companion} if there is no knotted essential torus $T$ so that

(i)  $T \subset V_0$, where $V_0$ is the solid torus bounded by $T_0$ in $S^3$.

(ii) $T$ bounds a solid torus in $V_0$ and $T$ is not parallel to $T_0$.
\end{defn}

Informally, a fine companion is an innermost knotted essential torus in a link complement. By \cite[Proposition 2.1]{RB}, a disjoint union of fine companions with no parallel pair always bounds an unlink-complement.

\begin{prop}\label{prop:fine}
For a Brunnian link, the maximal collection of fine companions with no isotopic pair is finite, and the tori are disjoint after isotopy.
\end{prop}

We point out that, for a Brunnian link, the maximal collection of knotted essential tori with no isotopic pair may be neither finite nor mutually disjoint after isotopy, in view of an occurrence of Seifert-fibred piece in the JSJ-decomposition. Furthermore, the conclusion does not hold for knots in view of a connected sum of 3 prime knots as counterexample.

Another investigation is the relationship between s-sum and s-tie. In the rest of this paper, for a Brunnian link $L$, we let $\mathcal{T}_{sum} (L)$ denote the maximal collection of disjoint tori in the s-sum decomposition, and $\mathcal{T}_{fine} (L)$ denote the maximal collection of disjoint fine companions with no parallel pair.

\begin{lem}\label{lem:isotopy}
Let $L$ be a Brunnian link, $\mathcal{T}_{knotted}$ and $\mathcal{T}_{unknotted}$ be collections of disjoint knotted and unknotted essential tori in the link complement respectively. Then the tori in $\mathcal{T}_{knotted}$ are disjoint with the tori of $\mathcal{T}_{unknotted}$ after isotopy.
\end{lem}

\begin{prop}\label{prop:independent}
Let $L$ and $L'$ be Brunnian links, $C \subset L$ and $C' \subset L'$ be components, and $h$ be the map in Definition \ref{def:sum0}. Then $\mathcal{T}_{fine} (L_{C} \dagger_ {C'} L')= \mathcal{T}_{fine} (L) \sqcup h(\mathcal{T}_{fine} (L'))$.
\end{prop}

\begin{proof}
It suffices to show that (i) each knotted essential torus in the complement of $L$ is a knotted essential torus in the complement of $L_{C} \dagger_ {C'} L'$; (ii) each knotted essential torus in the complement of $L_{C} \dagger_ {C'} L'$ is inherited from the complement of either $L$ or $L'$.

Set $V=N(C)$ containing $h(L'-C')$ and $V'=S^3 - intN(C)$ containing $L-C$. For (i), we need only show that such a torus in $V'$ is still essential in the complement of $L_{C} \dagger_ {C'} L'$. Notice that $h(L'-C')$ is geometrically essential in $V$. A routine argument gives (i). For (ii), we may assume such a torus in the complement of $L_{C} \dagger_ {C'} L'$ is in $V'$ by the previous lemma. Clearly it is essential in the complement of $L$.
\end{proof}

\subsection{Essential tori in the complement of a Brunnian link}\label{subsec:tori}
We will give a detailed analysis of essential tori in the complements of Brunnian links. Given a Brunnian link $L$,  rather than consider a single torus, we consider a maximal collection of disjoint essential tori with no parallel pair, say $\mathcal{T}_{maximal}$.

Let $\mathcal{T}_{knotted}$ and $\mathcal{T}_{unknotted}$ be the subsets of knotted and unknotted tori in $\mathcal{T}_{JSJ}$ respectively, and $\mathcal{T'}_{knotted}$ and $\mathcal{T'}_{unknotted}$ be the subsets of knotted and unknotted tori in $\mathcal{T}_{maximal}$ respectively. Set $\mathcal{T}_{canonical} = \mathcal{T}_{sum} (L) \sqcup \mathcal{T}_{fine} (L)$.

We will show the following three facts. It is desirable to consider the JSJ-graph and use the Torus Decomposition Theorem. The main point of our argument is that each JSJ-piece corresponding to a root is atoroidal. It can by verified by Theorem \ref{thm:classification} and Proposition \ref{prop:annulus2}.

(1) $\mathcal{T}_{sum} = \mathcal{T}_{unknotted}=\mathcal{T'}_{unknotted}$. In fact, since each JSJ-piece corresponding to a root is atoroidal, it is easy to show that $\mathcal{T}_{sum} \subset \mathcal{T}_{unknotted}$. By the Torus Decomposition Theorem, we directly have $\mathcal{T}_{unknotted} \subset \mathcal{T'}_{unknotted}$, while Theorem \ref{thm:sum}(2) implies $\mathcal{T'}_{unknotted} \subset \mathcal{T}_{sum}$. This also outlines a proof of Proposition \ref{prop:sum}, and then Lemma \ref{lem:isotopy} follows from the Torus Decomposition Theorem.

(2) $\mathcal{T}_{fine}$ labels exactly the directed edges adjacent to roots. In fact, we may choose $\mathcal{T}_{maximal}$ so that it contains $\mathcal{T}_{fine}$. Then by the Torus Decomposition Theorem, $\mathcal{T}_{knotted} \subset \mathcal{T'}_{knotted}$. By definition of $\mathcal{T}_{fine}$, the unlink-complement bounded by it contains no knotted essential torus. On the other hand, the JSJ-piece corresponding to a root also contains no fine companion since it is atoroital. Thus the claim and Proposition \ref{prop:fine} follow.

(3) $\mathcal{T}_{maximal} - \mathcal{T}_{JSJ}$ can only appear in JSJ-pieces homeomorphic to the complements of key-chain links. This needs a careful examination on all of possible Seifert-fibred JSJ-pieces with the help of \cite{BM} (or \cite[Section 3]{RB}).

\begin{figure}[htbp]
		  \centering
    \includegraphics[scale=0.2]{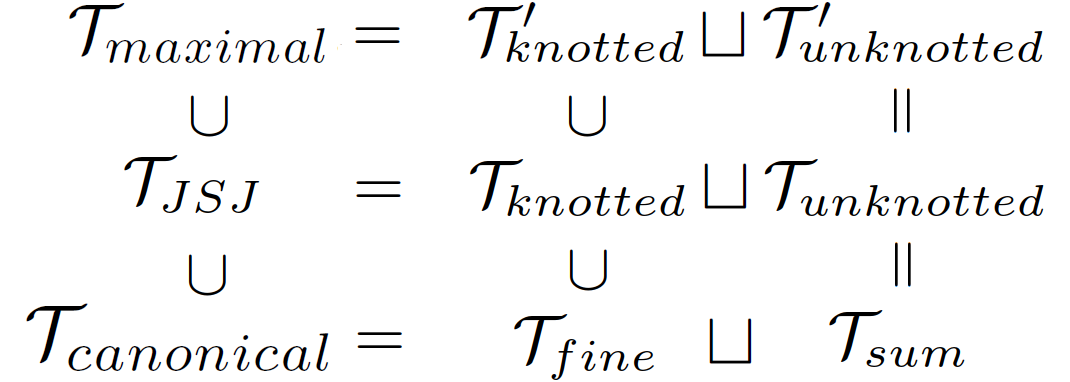}
    \caption{The containment relationship of the nine collections of tori.}
    \label{fig:jsj}
\end{figure}

\subsection{Tree-arrow structure}
Now we develop a notation of canonical geometric decomposition, simpler than the JSJ-decomposition, for Brunnian links.

\begin{defn}\label{def:cano}
If a link is an s-sum of certain links, we draw the tree structure of the s-sum expressed in terms of the links and removed components on a horizontal plane, and then replace the s-tie factors by their representation in Definition \ref{def:tie0}, the arrows drawn upward. We call such representation a \emph{tree-arrow structure}.

Moreover, if each factor with no arrow is either a hyperbolic Brunnian link or a Hopf $n$-link with $|n| \ge 2$, and each link in the s-pattern is a hyperbolic Brunnian link in unlink-complement, then we call this representation a \emph{Brunnian tree-arrow structure}.
\end{defn}

We sketch a 3-dimensional tree-arrow structure in Figure \ref{fig:treearrow}.

\begin{figure}[htbp]
		  \centering
    \includegraphics[scale=0.3]{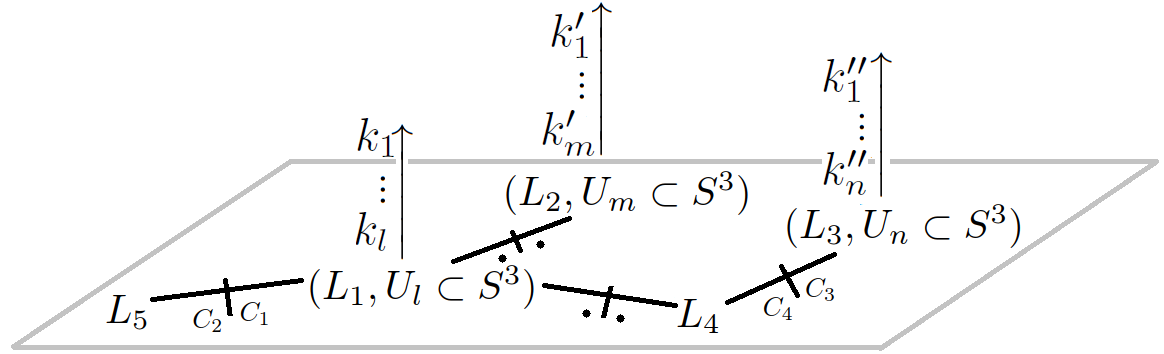}
    \caption{A 3-dimensional graph of tree-arrow structure.}
    \label{fig:treearrow}
\end{figure}

We present two equivalent descriptions of Brunnian tree-arrow structure.

Given a Brunnian link $L$, other than Hopf link, $\mathcal{T}_{sum} (L)$ decomposes it into s-prime factors $L_1, ... ,L_n$. Take the s-pattern of $L_i$ corresponding to $\mathcal{T}_{fine} (L_i)$ as long as $\mathcal{T}_{fine} (L_i) \ne \emptyset$. Then we obtain a Brunnian tree-arrow structure due to Theorem \ref{thm:classification}, Proposition \ref{prop:annulus2} and Thurston's Hyperbolization Theorem. Moreover, $\mathcal{T}_{fine} (L) = \sqcup \mathcal{T}_{fine} (L_i)$.

Combining (1) and (2) in the previous subsection, we see Brunnian tree-arrow structure is essentially the companionship graph with each maximal rooted tree replaced by its representative knots.

It is sure that every Brunnian tree-arrow structure gives a Brunnian link by Theorem \ref{thm:sum} (1) and Theorem \ref{thm:tie} (1). From the discussion of the previous two paragraphs, we may assert:

\begin{thm}\label{th:cano}
Brunnian tree-arrow structures correspond bijectively to Brunnian links.
\end{thm}

\section{Untying Brunnian links}\label{sec:untie}
Hyperbolic Brunnian links in unlink-complements occur as building blocks in Brunnian tree-arrow structure. However, Example \ref{example:unlinkcomplement} indicates that Brunnian links in unlink-complements may be too diverse in general. This motivates the following operation to transform a Brunnian link in unlink-complement into $S^3$.

\begin{defn}\label{def:untie}
Let $L$ be a   Brunnian link in $S^3$, and $T$ be a torus in the complement $S^3-L$, which bounds  a solid torus $V$ such that $L \subset V$.
We perform \emph{untying $T$ for $L$} if we re-embed $V$ into $S^3$ in the unknotted way by a map $h$ such that the longitude of $V$ is mapped to the meridian of $S^3-int h(V)$, and

type-1: take $h(L) \sqcup C$, if $h(L)$ is trivial in $S^3$, where $C$ is the core of $S^3-h(V)$;

type-0: take $h(L)$, if $h(L)$ is nontrivial in $S^3$.

\end{defn}

It is straightforward to show that the resulted link is Brunnian. Since untying inessential torus gives the original link, we only focus on essential torus.

Set $\mathcal{T}_{fine} (L)= \{T_1 ,T_2 , ... ,T_l \}$ for a Brunnian link $L$. The basic question is how the order of untying tori affects the resulted link. Three questions arise:

(1) Is the type of untying invariant under the change of order?

(2) After untying $T_1$, does $T_2$ must remain essential?

(3) After untying all fine companions of $L$, is the resulted link untied?

They all have negative answers as will be shown in Examples \ref{example:untie1}, \ref{example:untie2}, and \ref{example:untie3} respectively.

\begin{example}\label{example:untie1}
Figure \ref{fig:order}. If we untie $T_1$ before $T_2$, then $T_1$ is untied in type-0 and $T_2$ is untied in type-1. On the other hand, if we untie $T_2$ first, then we should untie $T_2$ in type-0.
\end{example}

\begin{figure}[htbp]
		  \centering
    \includegraphics[scale=0.25]{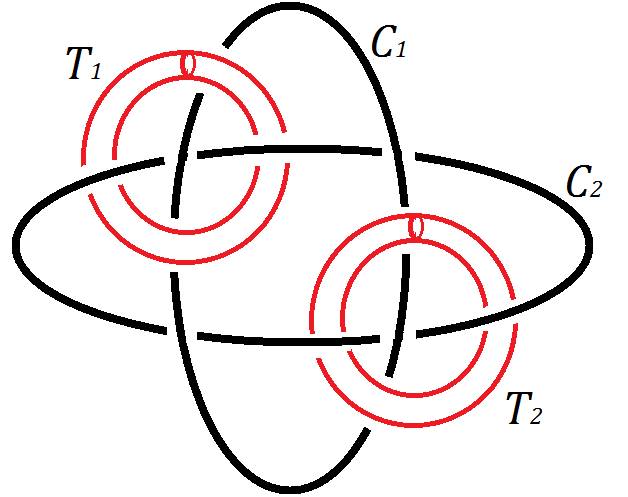}
    \caption{$(C_1 \sqcup C_2 , U_2 \subset S^3 )$ is an s-pattern.}
    \label{fig:order}
\end{figure}

\begin{example}\label{example:untie2}
Figure \ref{fig:t1t2}. Untie $T_1$, then $T_2$ becomes inessential.
\end{example}

\begin{figure}[htbp]
		  \centering
    \includegraphics[scale=0.25]{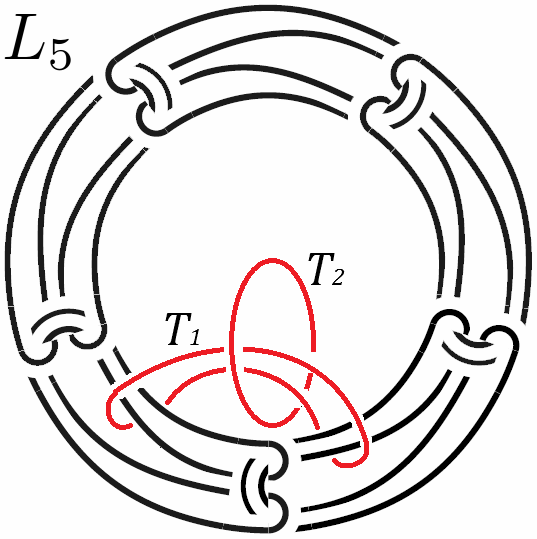}
    \caption{$(L_5, U_2 \subset S^3 )$ is an s-pattern. $T_1$, $T_2$ are the boundary of the regular neighborhood of the two red circles.}
    \label{fig:t1t2}
\end{figure}

\begin{example}\label{example:untie3}
Figure \ref{fig:tienew}. After untying all fine companions, a new essential knotted torus emerges.
\end{example}

\begin{figure}[htbp]
		  \centering
    \includegraphics[scale=0.25]{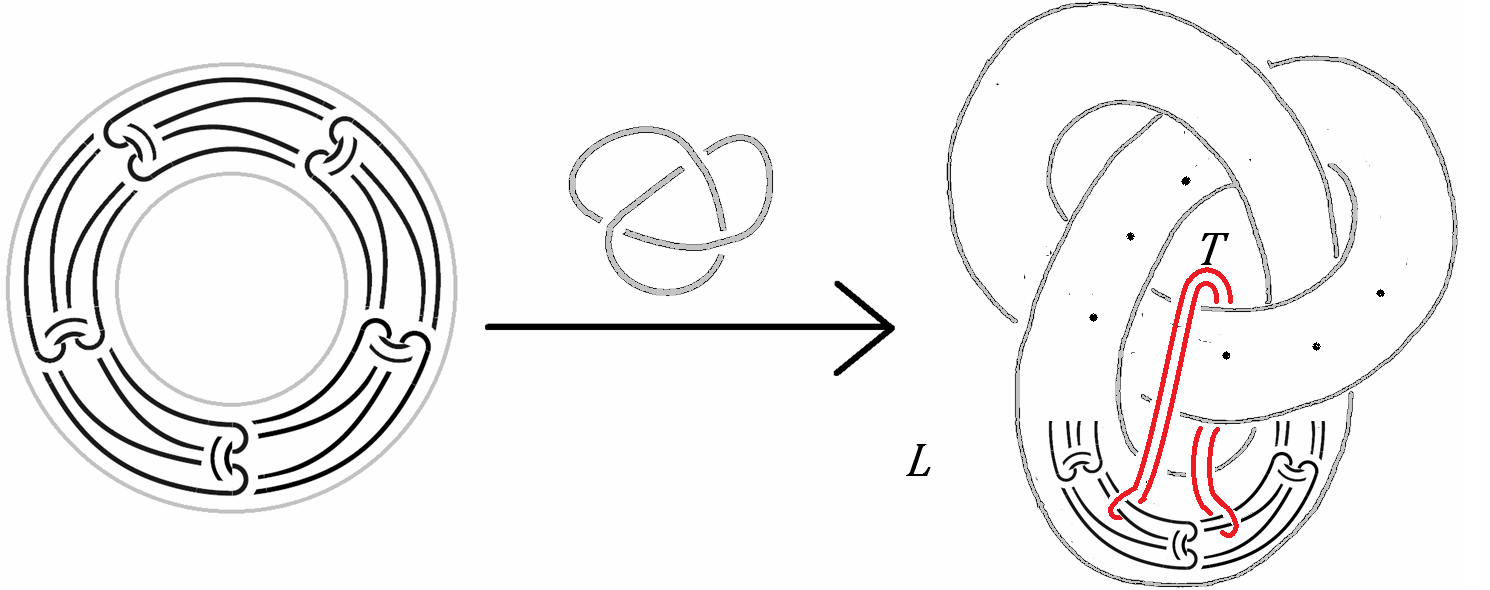}
    \caption{The right figure is an s-pattern$ (L, U_1 \subset S^3 )$. The boundary of regular neighborhood of the red circle is $T$. After untying $T$, i.e. the red circle is filled, the link is still an s-tie.}
    \label{fig:tienew}
\end{figure}

Fortunately, we have partially affirmative answers to questions (1) and (3).

\begin{prop}\label{prop:finite}
(1) Let $T_1 ,T_2 , ... ,T_l $ be fine companions for a Brunnian link $L$. Suppose $L_0$ is the resulted link after untying $T_1 ,T_2 , ... ,T_{l-1}$ in order. If we should untie $T_l$ for the original $L$ in type-1, then we should now untie $T_l$ for $L_0$ in type-1.

(2) For a Brunnian link, if in each step we untie an essential knotted torus for the obtained link, then the procedure stops in finite steps.
\end{prop}

\begin{proof}
(1). Let $V_l$ be the solid torus bounded by $T_l$ and the unlink-complement bounded by $T_1 \cup T_2 \cup ... \cup T_{l-1}$ be $U_0$. It is equivalent to prove that if the re-embedding $h$ of $V_l$ makes $h(L)$ trivial, then the re-embedding by the same choice of longitude on $T_l$ will make $L_0$ trivial. Untie $T_l$ first. By Theorem \ref{thm:tie} (1), if $h(L)$ is trivial in $S^3$, then it is trivial in $U_0$.

(2). In each step, take all the link complement pieces in the Brunnian tree-arrow structure of the obtained link. There are no essential knotted tori in Seifert pieces by Proposition \ref{prop:annulus} and Proposition \ref{prop:annulus2}. For each hyperbolic piece, untying either keeps or decreases the hyperbolic volume. The volumes of hyperbolic 3-manifolds have a uniform  lower bound, in fact it is of order type $\omega^\omega$, see \cite{Th}. So the procedure stops in finite steps.
\end{proof}

We finally remark that the main theorem of \cite{MS} implies that in Definition \ref{def:untie}, the way of re-embedding making $h(L)$ trivial is unique. Using this theorem inductively with the help of $s$-tie construction, we confirm that we may always drawn Brunnian links in unlink-complements like Figure \ref{fig:unlinkcomplement}. We will omit the details here.

\begin{prop}
For every Brunnian link $L$ in unlink-complement $U_n$, there exists an embedding $h: U_n \rightarrow S^3$ (actually infinitely many) so that $S^3 - inth(U_n)$ is a regular neighborhood of unlink and $h(L)$ is Brunnian in $S^3$.
\end{prop}

In conclusion, study of Brunnian links in unlink-complements can be reduced to that in $S^3$, as what we have hoped. However, the following problem is basic but seems challenging.

\begin{problem}
Given a Brunnian link $L$, find or classify all the unlinks not intersecting $L$ such that $L$ is Brunnian in their complements.
\end{problem}

From Theorem \ref{thm:classification},  we ask the question of the probability of a  Brunnian link to be a hyperbolic link.

Note that there are different models for random links. For random links via random braids or via random bridge decompositions, a generic link is a hyperbolic link \cite{IchiharaMa, Ito, Ma2014}.  But surprisely, via the more combinatorial model, that is, from the viewpoint of crossing number,  hyperbolic links are not generic \cite{Malyutin2018, Malyutin2019}. Note that   Brunnian links are determined by their complement  when there are  at least three components \cite{MS}. Inspiried by  \cite{Malyutin2018}, we propose the following problem.

\begin{problem}Let $B(n)$ be the number of Brunnian links with  $n$ or fewer crossings, and $B_{h}(n)$ be the number of hyperbolic Brunnian links with  $n$ or fewer crossings. Then
$$\limsup_{n \rightarrow \infty} \frac{B_{h}(n)}{B(n)}=a, $$
$$\liminf_{n \rightarrow \infty} \frac{B_{h}(n)}{B(n)}=b.$$
We have $0 \leq b \leq a \leq 1$  trivially, we believe that   $a=b$. Is $b=0$ or $a <1$?

\end{problem}

\bibliographystyle{amsalpha}

\clearpage

\section{Appendix}
In this appendix, we prove Proposition \ref{prop:sumtree}, Proposition \ref{prop:fine}, and Lemma \ref{lem:isotopy}.

\begin{lem}\label{lem: hopf}
Let $L = L^1$$_{C_1} \dagger_{C_2} L^2$ be a Brunnian link and  $L^i- C_i$ is contained in the solid torus $V_i$ bounded by the decomposing torus $T$, for $i=1, 2$. If neither $L^1$ nor $L^2$ is Hopf link, then

    (i)Any meridian disk in $V_i$ intersects $L^i - C_i$ with at least 2 points, for $i=1, 2$;

    (ii) If a component of $L^1 - C_1$ bounds a disk that intersects $T$ with a meridian of $V_2$, then the disk intersects $T$ with at least two meridians of $V_2$.
\end{lem}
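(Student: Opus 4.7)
The plan is to prove both parts by contradiction. First, by Proposition~\ref{prop:sum}, $L^1$ and $L^2$ are themselves Brunnian and $L^i - C_i$ is geometrically essential in $V_i$ for $i=1,2$, so in particular any meridian disk of $V_i$ already meets $L^i - C_i$ in at least one point.

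For (i), I would suppose that some meridian disk $D$ of $V_1$ meets $L^1 - C_1$ in exactly one point $p$, lying on a component $K \subset L^1 - C_1$. I would then extend $D$ to a Seifert disk $\tilde D$ for $C_1$ in $S^3$ by attaching, inside the complementary solid torus $N(C_1) = S^3 \setminus V_1$, an annulus with boundary $\partial D \cup C_1$ ($\partial D$ being a longitude of $N(C_1)$). Since this annulus lies in $N(C_1)$, it is disjoint from $L^1 - C_1 \subset V_1$, so $\tilde D \cap (L^1 - C_1) = \{p\}$, which gives $lk(K, C_1) = \pm 1$. If $L^1$ has at least three components, the Brunnian property of $L^1$ forces the $2$-component sublink $C_1 \sqcup K$ to be an unlink, so $lk(K, C_1) = 0$, a contradiction. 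If instead $L^1 = C_1 \sqcup K$ has exactly two components, then a simple closed curve in a solid torus of geometric winding number one is isotopic to the core, so $K$ is isotopic in $V_1$ to the core of $V_1$ and $L^1$ is the standard Hopf link; this is excluded by the implicit nontriviality of the s-sum decomposition, since the Hopf link is the identity for $\dagger$.

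For (ii), let $E$ be a disk bounded by the component $K \subset L^1 - C_1$, taken in $S^3 \setminus (L - K)$. I would put $E$ transverse to $T$ and then remove every $T$-inessential component of $E \cap T$ by standard innermost-disk surgery. All remaining curves of $E \cap T$ are essential on $T$ and pairwise disjoint, hence mutually parallel on the torus, so once one of them is a meridian of $V_2$, every remaining one is. Supposing for contradiction that exactly one meridian-of-$V_2$ circle $c$ remained, $c$ would split $E$ into a piece $E_1 \subset V_1$ containing $\partial E = K$ and a piece $E_2 \subset V_2$ bounded by $c$; then $E_2$ would be a disk in $V_2$ whose boundary is a meridian of $V_2$, i.e.\ a meridian disk of $V_2$, and $E_2 \subset E$ is disjoint from $L^2 - C_2$. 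This contradicts the geometric essentialness of $L^2 - C_2$ in $V_2$ given by Proposition~\ref{prop:sum}, so at least two meridian circles of $V_2$ must occur in $E \cap T$.

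The main obstacle will be the two-component degenerate case in (i). The Brunnian property alone does not force pairwise linking numbers to vanish when $L^1$ has only two components: for instance, the generalized Hopf $n$-links with $|n| \ge 2$ are Brunnian with linking number $n$, and for them the conclusion of (i) already holds automatically with at least $|n| \ge 2$ geometric intersection points. The genuine obstruction is only the standard Hopf link, which one eliminates by invoking the implicit nontriviality of the satellite-sum decomposition together with the classical fact that a simple closed curve of geometric winding number one in a solid torus is isotopic to the core.
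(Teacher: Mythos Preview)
Your argument for (i) is correct and matches the paper's approach.

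For (ii), however, there is a genuine gap. You begin by taking $E$ ``in $S^3 \setminus (L-K)$'', but nothing in the hypothesis says the disk avoids the rest of the link. The statement concerns an \emph{arbitrary} disk bounded by a component of $L^1-C_1$ whose intersection with $T$ happens to contain a meridian of $V_2$; in the application (the ``total intersection number'' argument in the appendix) these disks are precisely the ones that \emph{do} meet other components of $L$. Once you drop the unjustified disjointness assumption, your subdisk $E_2\subset V_2$ may very well meet $L^2-C_2$, so it no longer contradicts the geometric essentialness of $L^2-C_2$ in $V_2$, and your proof collapses.

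The paper avoids this trap by arguing with the \emph{core of $V_2$} rather than with $L^2-C_2$. After surgering away the $T$-inessential circles you are left with a disk $D'$ for $C_j$ with $D'\cap T$ a single meridian of $V_2$; regardless of how $D'$ meets $L^2-C_2$, the subdisk of $D'$ inside $V_2$ is a meridian disk of $V_2$ and hence hits the core of $V_2$ (which is $C_1$) exactly once. Thus $C_j$ and $C_1$ form a Hopf link, and you finish exactly as in part~(i). Your own machinery would have reached the same conclusion had you tracked $lk(K,C_1)$ via $E_2\cap(\text{core of }V_2)=1$ instead of invoking essentialness of $L^2-C_2$.
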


\begin{proof}
Note that if a link has two unknotted components, and one of which bounds a disk intersecting the other with exactly one point, then the link is Hopf link.

(i) Without loss of generality, suppose a meridian disk $D$ of $V_1$ intersects $L^1 - C_1$ with only
one point, say $p$, and $p$ is on a component $C \subset L^1$. Then $\partial D$ and $C$ form Hopf link. So by the Brunnian property, $L^1 - C_1 =C$. It follows that $T$ is $\partial$-parallel to $N(C)$.

(ii) Assume a component $C \subset L^1 - C_1$ bounds a disk $D$ such that only one of the intersection circles, say $\tilde{C} \subset D \cap T$, is a meridian of $V_2$. Then all the other circles are trivial on $T$. Eliminating all trivial circles by surgery from the innermost on $T$, we get a new disk $D'$ intersecting the core of $V_2$ with exactly one point. Hence this core and $C$ form Hopf link. Thus $L^1$ is Hopf link; this is impossible.
\end{proof}

\begin{proof}[Proof of Proposition \ref{prop:sumtree}]
The proof will be split into two parts, existence and uniqueness.

{\sc Existence}. Given a Brunnian link $L$, an s-sum decomposition with no Hopf link factor is equivalent to a collection of
disjoint unknotted essential tori $\{T_1, T_2, ... ,T_n\}$ in $S^3 -L$. Each $T_i$ decomposes $L$ as an s-sum. If some solid tori bounded by other tori are replaced by their cores, $T_i$ still decomposes this Brunnian link as an s-sum. There are no parallel tori in the collection since Hopf link is not s-prime. We add these tori in succession into $S^3$. Each step gives an s-sum decomposition.

Now in a step, suppose $L$ is decomposed as $L_1 (C_1)$ and $L_2 (C_2)$. If both $L_1$ and $L_2$ has more than two component, it's called a \emph{big step}. Otherwise, it's called a \emph{small step}. In each big step, $L_1$ and $L_2$ both have the numbers of components less than $L$. So there are finitely many big steps in total. In each small step, without loss of generality, we may assume $L_2$ has two components and $L_1$ has the same number of components as $L$.

Consider the disks bounded by a component of $L$ intersecting other components. We take the least number of the intersecting points among all such disks. The \emph{total intersection number} of $L$ is the sum of such numbers over all components of $L$. By Lemma \ref{lem: hopf}, the total intersection number of $L_1$ is less than that of $L$. The same holds when we decompose $L_2$. So there are finitely many small steps between two big steps. It follows that there are finitely many steps in total.

{\sc Uniqueness}. Suppose $\{T_1, T_2, ... ,T_n\}$ and $\{T'_1, T'_2, ... ,T'_l\}$ are two maximal collections of disjoint tori, both decomposing $L$ into s-prime Brunnian links. We consider each $T_i \cap T'_j$, a union of disjoint circles.

First consider circles inessential in both $T_i$ and $T'_j$. Choose an innermost circle $C_0$ on $T_i$, bounding $D_i$ and $D'_j$ on $T_i$ and $T'_j$ respectively. Then $D_i$ and $D'_j$ form a sphere. There is one \emph{empty} 3-ball bounded by this sphere, namely, the 3-ball does not intersect $L$, since $L$ is not split. So we can isotope $T'_j$ to push $D'_j$ across $D_i$ to eliminate $C_0$. We may assume there is
no circle inessential in both $T_i$ and $T'_j$.

Next consider circles inessential in one of $T_i$ and $T'_j$, say $T'_j$. Choose an innermost such
circle $C'$, bounding $D'$ on $T'_j$. Then $D'$ is the meridian disk of one solid torus bounded by
$T_i$, which is impossible by (i) $\Rightarrow$ (ii) in Proposition \ref{prop:sum}. So we may assume the
intersection circles are both essential on $T_i$ and $T'_j$, and thus are parallel.

Now the essential parallel circles cut both $T_i$ and $T'_j$ into annuli. Recall that any properly embedded annulus in a solid torus with essential boundary is $\partial$-parallel unless the boundaries are meridians. We may choose a solid torus $V$ bounded by $T_i$, such that the circles are not meridian of $V$. Consider an annulus $A' \subset T'_j$ with boundary on $T_i$, outermost in $V$. Then there is an annulus $A \subset T_i$ with the same boundary, and $A \cup A'$ bounds a compression solid torus $V_1$ for $A'$ in $V$. It follows that $V- V_1$ is also a solid torus.

If $V_1$ is empty, isotope $T'_j$ to delete $\partial A$. Otherwise, we denote the proper sublink in $V_1$ to be $L_1$, which is geometrically essential in $V_1$. We claim $V- V_1$ is empty. Assume otherwise. Then the proper sublink $L_2$ in $V - V_1$ is also geometrically essential. Notice that the sublink in $V$ and the core of $S^3 - V$ form a Brunnian link. Consequently when deleting $L_1$, $L_2$ bounds mutually disjoint disks, contradicting that $L_2$ is geometrically essential in $V - V_1$.

If $V_1$ is knotted, whose core is a torus knot, then $L_1$ is already nontrivial in $S^3$, a contradiction.
So $V_1$ is unknotted. This implies that the circles on $T_i$ are either $(1, n)$-curves or $(n, 1)$-curves.

{\sc Case~1}. They are $(1, n)$-curves. Then $T_i - A$ is also parallel to $A'$ in $V - V_1$. Since $V - V_1$ is empty, we can isotope $T_i$ across $A'$ to delete $\partial A'$.

{\sc Case~2}. They they are $(n, 1)$-curves. Then we can re-choose $V$ to be the other solid torus bounded by $T_i$ to transform into the first case.

Finally, we have that $T_1 \cup T_2 \cup ... \cup T_n$ and $T'_1 \cup T'_2 \cup ... \cup T'_l$ are disjoint. If there is a $T'_j$ not parallel to any $T_i$, we may add it into the first collection, contradicting the maximal choice. Thus they are parallel to each other up to orders and $n=l$.
\end{proof}

\begin{proof}[Proof of Proposition \ref{prop:fine}]
Suppose there are two collections of mutually disjoint fine companions $\mathcal{T}$ and $\mathcal{T'}$, where both have no parallel tori. By Haken's finiteness theorem, the number of complements in such collections is bounded.

The remainder of the proof is split into 2 steps. (1): After isotopy, $\mathcal{T}$ and $\mathcal{T'}$ are disjoint. (2): If the two collections are maximal, then they coincide after isotopy.

(1). Consider the intersection of $T_1 \in \mathcal{T}$ and $T'_1 \in \mathcal{T'}$ without loss of generality and denote the solid tori bounded by them to be $V$ and $V'$ respectively. Using the same argument as in the proof of the uniqueness in Proposition \ref{prop:sumtree}, we can easily eliminate intersection circles inessential on $T_1$ or $T'_1$.

Consider an annulus $A' \subset T'_1$ in $V$ with boundary on $T_1$, outermost in $V$. Then there is an annulus $A$ on $T_1$ with the same boundary.

{\sc Case~1}. $A'$ is $\partial$-parallel. Then we may assume $A \cup A'$ bounds a compression solid torus $V_1$ for $A'$ in $V$. If $V_1$ is empty, we delete $\partial A'$ as in the proof of Proposition \ref{prop:sumtree}. Suppose $V_1$ is not empty.

First we claim $V - V_1$ is empty, and thus $L \subset V_1$. In fact, otherwise, the proper sublink in $V - V_1$ is geometrically essential and thus nontrivial in $S^3$. Since $L$ is geometrically essential in $V$, it is geometrically essential in $V_1$. Now we discuss the circles $\partial A$ on $T_1$.

{\sc Subcase~1.1}. $\partial A$ are (0,1)-curves, namely, meridians of $V$. This is impossible since if  there is a compressing disk for $T'_1$ in $V$, then $T_1$ is not essential torus.

{\sc Subcase~1.2}. $\partial A$ are $(1, n)$-curves. Then $T_1 - A$ is also parallel to $A'$ in $V - V_1$. We can isotope $T_1$ across $A'$ to eliminate $\partial A'$.

{\sc Subcase~1.3}. Other cases. Then by surgery along $A$, we produce a new torus with $A'$ in $V$ which is not parallel to $T_1$, contradicting that $T_1$ is fine.

{\sc Case~2}. $A'$ is not $\partial$-parallel. Then $\partial A$ are $(0, 1)$-curves and $A' \cup (T_1 - A)$ bounds a knotted solid torus $\tilde{V}$ in $V$, with the same meridian. So the torus $A' \cup (T_1 - A)$ is
essential and thus $L \subset \tilde{V}$. Surgery along $T_1 - A$ produces a new torus with $A'$ in
$V$ which is not parallel to $T_1$, contradicting that $T_1$ is fine.

In summary, $T_1$ and $T'_1$ can be isotoped to be disjoint. Step by step we can make the fine companions in the two collections to be disjoint.

(2). Since the two collections are both maximal, they must be parallel to each other pair by pair.
\end{proof}

\begin{proof}[Proof of Lemma \ref{lem:isotopy}]
Consider the intersection circles of $T_1 \cup T_2 \cup ... \cup T_n$ and $T'_1 \cup
T'_2 \cup ... \cup T'_l$, the unions of tori in the two collections. Inessential circles can be deleted owing to that $L$ is not split and the sublinks in the corresponding solid tori are geometrically essential. Notice that that the proof of uniqueness in Proposition \ref{prop:sumtree} did not use that $T'_i$'s are unknotted tori. So the proof here is by the same token.
\end{proof}

\end{document}